\newtheorem{lemma}{Lemma}
\newtheorem{theorem}{Theorem}
\newtheorem{prop}{Proposition}
\newcommand{\R}{\mathbb{R}}
\newcommand{\N}{\mathbb{N}}
\renewcommand{\dim}{\mbox{dim}_{\mathcal{H}}}
\title[Self-affine multifractal analysis]{Multifractal analysis for Bedford-McMullen carpets}
\author{Thomas Jordan}
\address{Thomas Jordan\\Department of Mathematics\\ The University of Bristol\\
University Walk\\Clifton\\ Bristol\\BS8 1TW\\UK}
\email{thomas.jordan@bristol.ac.uk}
\author{Micha\l\ Rams}
\address{Micha\l\ Rams\\Institute of Mathematics\\ Polish Academy of Sciences\\ ul.
\'Sniadeckich 8, 00-956 Warszawa\\ Poland }
\email{M.Rams@impan.gov.pl}
\thanks{The research of M.R. was supported by grants EU FP6 ToK SPADE2, EU FP6 RTN
CODY and MNiSW grant 'Chaos, fraktale i dynamika konforemna'. The
research was started during a visit of M. R. to Bristol. M. R.
would like to thank Bristol University for the hospitality shown
during his visit.}
\begin{document}
\begin{abstract}
In this paper we compute the multifractal analysis for local
dimensions of Bernoulli measures supported on the self-affine
carpets introduced by Bedford-McMullen. This extends the work of
King where the multifractal analysis is computed with strong additional separation
assumptions.
\end{abstract}
\maketitle
\section{Introduction}

The multifractal properties of local dimensions of fractal
measures have been studied for more than twenty years. Some problems are
already completely solved (for example, the local dimension spectra
for Gibbs measures on a conformal repeller, see \cite{pesinbook}).
However non-conformal systems turn out to be much less
tractable and only some specific examples have been solved.

By the local dimension spectrum of a measure we mean the function $\alpha\rightarrow\dim X_{\alpha}$ where $X_{\alpha}$ is the set of all points with local dimension $\alpha$.
There exists a well-developed technique for calculating the local
dimension spectra of invariant measures for dynamical systems.
One begins by introducing a symbolic description on the
attractor and defining a suitable {\it symbolic} local dimension of
the measure. The first step is usually first to compute the multifractal spectrum for the symbolic local dimension. This is done by constructing a suitable auxiliary measure which is exact-dimensional and only supported on the set of points where
the symbolic local dimension of the original measure takes a prescribed
value, say $\alpha$. If the auxiliary measure has been correctly chosen then the Hausdorff dimension of points with symbolic local dimension $\alpha$ will be the dimension of the auxiliary measure. The final step is to show that the multifractal spectrum for the symbolic local dimension of the original measure is the same as the spectrum for the real local dimension. Once
again, we refer the reader to \cite{pesinbook},
where this technique is explained in detail.

In this paper we are interested in the local dimension spectra
for Bernoulli measures on Bedford-McMullen carpets. This
problem has already been studied in several papers. King
calculated the symbolic local dimension spectrum in \cite{king},
this result was then generalized to Gibbs measures by Barral and
Mensi \cite{BM} and to higher dimensional generalized Sierpi\'nski
carpets by Olsen \cite{olsen} (see also \cite{olsen2}). However,
in all those papers authors were unable to make the last step,
from symbolic to real local dimension. For that reason, these
papers had to assume some strong additional assumptions
(e.g. a very strong separation property) guaranteeing the equality
of symbolic and real local dimension spectra. However in \cite{BM} it is shown the symbolic and real local dimension spectra are the same for the decreasing part of the spectrum. Naturally, it was
conjectured (for example by Olsen in \cite{olsen2}) that these
additional assumptions are not necessary for the increasing part of the spectra also to be the same.

The purpose of this paper is to present this missing argument.
Unfortunately our arguments only apply to the two-dimensional case.
For simplicity we just look at the Bernoulli measures studied in \cite{king}.

\section{Statement of results}

We now proceed to formally state our results. To define the
Bedford-McMullen carpets \cite{bed, mcmullen} we introduce a digit
set
$$D\subseteq\{0,\ldots m-1\}\times\{0,\ldots,n-1\},$$
where $m<n$. For each $(i,j)\in D$ we define
$T_{i,j}:\R^2\rightarrow\R^2$ by
$T_{i,j}(x,y)=(n^{-1}x,m^{-1}y)+(j,i)$. We let $\Lambda$ be the
unique non-empty compact set which satisfies $\cup_{(i,j)\in
D}T_{i,j}(\Lambda)=\Lambda$. We will also let $\sigma=\frac{\log
m}{\log n}$. The Hausdorff and box counting dimension of $\Lambda$
were calculated by McMullen and Bedford in \cite{mcmullen} and
\cite{bed}. We introduce a positive probability vector
$\underline{p}$ with element $p_{ij}$ for each $(i,j)\in D$. We
also define the related  probability vector $\underline{q}$ where
$q_i=\sum_{j:(i,j)\in D} p_{ij}$. Thus we can define a self-affine
measure $\mu$ which is the unique probability measure satisfying
$$\mu(A)=\sum_{(i,j)\in D} p_{ij}\mu(T_{i,j}^{-1} A)$$
for any Borel subset of $\R^2$. For any $x\in\R^2$ the local
dimension of $\mu$ at $x$ is defined by
$$d_{\mu}(x)=\lim_{r\rightarrow 0}\frac{\log\mu(B(x,r))}{\log r}$$
if this limit exists. For $\alpha\in\R$ the level sets $X_{\alpha}$ are defined by
$$X_{\alpha}=\{x\in\Lambda:d_{\mu}(x)=\alpha\}.$$
In King \cite{king} the function $f(\alpha)=\dim X_\alpha$ was
calculated under the condition that for any pair
$\{(i,j),(i',j')\}\in D$ we have that $|i-i'|\neq 1$ and if $i=i'$
then $|j-j'|>1$. In \cite{BM} this assumption is weakened: they
are also able to allow digit sets $D$ where
$D\cap\{(0,0),\ldots,(0,n-1)\}=\emptyset$ or
$D\cap\{(m-1,0),\ldots,(m-1,n-1)\}=\emptyset$. Furthermore
arbitrary digit sets can be considered as long as the probability
vector $p$ is chosen so that $\sum_{j:(1,j)\in D}
p_{1j}^t=\sum_{j:(m,j)\in D}p_{mj}^t$ for all $t>0$. We will only
assume that the digit set has elements in more than one row and
more than one column, without this assumption the measure $\mu$ is
effectively a self-similar measure on the line and the singularity
spectrum is computed in \cite{AP}.

The formula we obtain for $\dim X_{\alpha}$ is exactly the formula
obtained in \cite{king}. To recall the definition we fix $t>0$ and
let $\gamma_i=\sum_{j:(i,j)\in D} p_{ij}^t$. We then define
$\beta(t)$ to be the unique solution to
$$m^{\beta(t)}\sum_{(i,j)\in D}p_{ij}^tq_i^{(1-\sigma)t}\gamma_i^{\sigma-1}=1.$$
We will let
$$\alpha_{\min}=\min_{(i,j)\in D}\frac{ -\sigma\log p_{ij}+(\sigma-1)\log q_i}{\log m}$$
and
$$\alpha_{\max}=\max_{(i,j)\in D}\frac{ -\sigma\log p_{ij}+(\sigma-1)\log q_i}{\log m}.$$
Our main result is that
\begin{theorem}\label{main}
For any $\alpha\in (\alpha_{\min},\alpha_{\max})$ we have that
$$f(\alpha)=\dim X_{\alpha}=\inf_{t}(\alpha t+\beta(t)).$$
In other words $f$ is the Legendre transform of $\beta$.
Furthermore $f$ is differentiable with respect to $\alpha$ and is
concave.
\end{theorem}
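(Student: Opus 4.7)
I follow the symbolic--to--geometric paradigm sketched in the introduction. Code $\Lambda$ by sequences $\omega=((i_s,j_s))_{s\geq 1}\in D^{\N}$, and for a point $x$ with coding $\omega$ let the approximate square $Q_k(x)$ consist of those $x'$ whose coding $\omega'$ satisfies $j'_s=j_s$ for $1\leq s\leq k$ and $i'_s=i_s$ for $1\leq s\leq\lceil k/\sigma\rceil$. This is a piece of $\Lambda$ of diameter $\asymp n^{-k}$ with
\[
\mu(Q_k(x))=\prod_{s=1}^k p_{i_s j_s}\prod_{s=k+1}^{\lceil k/\sigma\rceil} q_{i_s},
\]
and the associated symbolic local dimension is $d^s_\mu(x):=\lim_k -\log\mu(Q_k(x))/(k\log n)$. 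Existing work (King \cite{king}, Barral--Mensi \cite{BM}) computes the spectrum of $d^s_\mu$ to be $\inf_t(\alpha t+\beta(t))$ and produces, for each $\alpha\in(\alpha_{\min},\alpha_{\max})$, a Bernoulli auxiliary measure $\nu_\alpha$ of Hausdorff dimension $f(\alpha)$ concentrated on $\{d^s_\mu=\alpha\}$. Because $B(x,n^{-k})\supseteq Q_k(x)$ trivially, $d_\mu(x)\leq d^s_\mu(x)$ for every $x$. The remaining task is to upgrade the symbolic multifractal formula to the real local dimension; by \cite{BM} only the increasing branch of the spectrum still requires work.

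\textbf{Lower bound.} I would show that $\nu_\alpha$-a.e.\ $x$ already satisfies $d_\mu(x)=\alpha$, so that $\nu_\alpha(X_\alpha)=1$ and the mass distribution principle gives $\dim X_\alpha\geq\dim\nu_\alpha=f(\alpha)$. Only the inequality $d_\mu(x)\geq\alpha$ is non-trivial. The geometry of the Bedford--McMullen construction implies that $B(x,n^{-k})$ meets only a bounded number of approximate squares, all of which are of the form $Q_k(y)$ for some $y$ whose coding differs from that of $x$ by a carry in the $j$-digits near position $k$ or in the $i$-digits near position $k/\sigma$. One then verifies, via a Borel--Cantelli argument, that for $\nu_\alpha$-a.e.\ $x$ and every $\epsilon>0$ none of these finitely many alternative squares has $\mu$-measure exceeding $n^{-k(\alpha-\epsilon)}$ for all sufficiently large $k$; the probabilities of bad events are made summable in $k$ by exploiting the uniformly bounded number of affected digits together with the product structure of $\nu_\alpha$.

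\textbf{Upper bound and finishing touches.} Cover $X_\alpha$ at scale $n^{-k}$ by the approximate squares it meets. Using Markov's inequality applied to the partition function $\sum_Q\mu(Q)^t\asymp n^{-k\beta(t)}$ (immediate from the defining relation for $\beta$), one bounds the number of squares of $\mu$-measure $\asymp n^{-k\alpha}$ by $\asymp n^{k(\alpha t+\beta(t))}$, so the $s$-dimensional Hausdorff pre-measure tends to $0$ whenever $s>\alpha t+\beta(t)$. Optimizing over $t$ gives $\dim X_\alpha\leq f(\alpha)$. Concavity and differentiability of $f$ on $(\alpha_{\min},\alpha_{\max})$ then follow from standard properties of the Legendre transform of a strictly convex, $C^1$ function, strict convexity of $\beta$ being a consequence of the standing hypothesis that $D$ has elements in more than one row and more than one column, so that $\mu$ is genuinely two-dimensional.

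\textbf{Main obstacle.} The hard part will be the Borel--Cantelli input to the lower bound on the increasing branch. There, $\nu_\alpha$-typical points sit in cylinders whose $\mu$-measure is comparatively large, and without a separation hypothesis it is a priori possible for a geometrically adjacent cylinder to carry still larger $\mu$-mass, overwhelming the contribution of $Q_k(x)$ to $\mu(B(x,n^{-k}))$ and depressing the real local dimension strictly below $\alpha$. Ruling this out is precisely what the strong separation assumptions in \cite{king,olsen,olsen2} did by fiat; doing it unconditionally requires a careful combinatorial analysis of the admissible carry-over patterns in the symbolic addresses of neighboring approximate squares, together with a concentration estimate tailored to the explicit form of $\beta(t)$ and to the Bernoulli structure of $\nu_\alpha$.
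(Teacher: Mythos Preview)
Your lower bound sketch is essentially the paper's: the auxiliary Bernoulli measure $\nu_\alpha$, together with a Borel--Cantelli argument on runs of repeated row/column digits, shows $d_\mu=\alpha$ $\nu_\alpha$-a.e., hence $\dim X_\alpha\geq f(\alpha)$. This part is fine and, as the paper notes, was already available from Barral--Mensi.

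The real gap is in your upper bound, and it is exactly where the novelty of the paper lies. Your claim that the partition function over approximate squares satisfies $\sum_Q \mu(Q)^t\asymp m^{-k\beta(t)}$ ``immediately from the defining relation for $\beta$'' is false. Writing $l=[\sigma k]$, the approximate squares are indexed by $D^l$ together with a row-string of length $k-l$, and one computes
\[
\sum_{R_k}\mu(R_k)^t=\Bigl(\sum_{(i,j)\in D}p_{ij}^t\Bigr)^{l}\Bigl(\sum_i q_i^t\Bigr)^{k-l},
\]
whereas $m^{-k\beta(t)}=\bigl(\sum_{(i,j)}p_{ij}^t q_i^{(1-\sigma)t}\gamma_i^{\sigma-1}\bigr)^k$. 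These do not agree unless the row weights $q_i$ (and $\gamma_i$) coincide, which is precisely the self-similar degenerate case. In general the naive Chebyshev bound you propose yields only the Legendre transform of the $L^q$-spectrum, which for Bedford--McMullen carpets can be strictly larger than $f(\alpha)$; the multifractal formalism in its crude form fails here.

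The paper circumvents this by not covering with all approximate squares at level $k$, but only with those in a subfamily $G(\alpha,\epsilon,k)$ on which an additional Birkhoff-type constraint $A_k(\underline{i})\geq-\epsilon$ holds (here $A_k$ compares the ergodic averages of $\log\omega$ at scales $l(k)$ and $k$, with $\omega(i,j)=q_i^t\gamma_i$). On this subfamily one does recover the estimate $\mu(R_k)^t\lesssim(1+\epsilon)^k\sum_{\Gamma_k}(p_{j_1}\cdots p_{j_k})^t(\omega_{j_1}\cdots\omega_{j_k})^{1-\sigma}$, and the multinomial theorem then yields $m^{-k\beta(t)}$. The heart of the matter is proving that these good squares still cover $X_\alpha$: for every $x\in X_\alpha$ there must be infinitely many $k$ at which some nearby $R_k$ lies in $G(\alpha,\epsilon,k)$. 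This is the paper's Proposition~\ref{keyprop}, established via a combinatorial recursion (Lemma~\ref{good}) showing that along any symbolic sequence one can find infinitely many $k$ with $A_k>-\epsilon$ and with $x$ safely in the interior of its row-column. So you have located the obstacle in the wrong half of the argument: the delicate step is the upper bound, not the lower one.
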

\begin{figure}
\includegraphics[width=3in,height=3in]{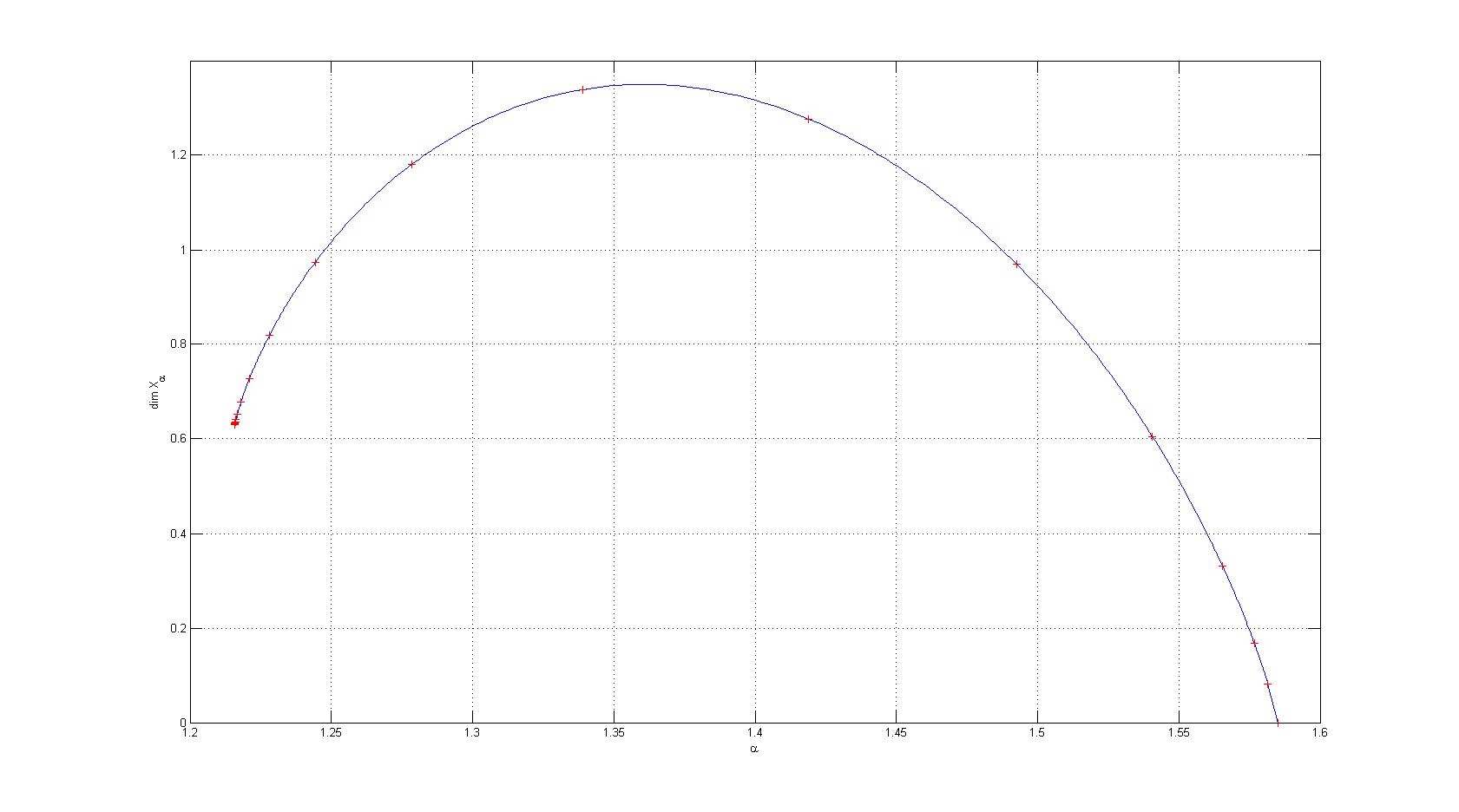}
\caption{The graph of $\alpha\rightarrow\dim X_{\alpha}$ where
$m=2,n=3$, $D=\{(0,0),(0,2),(1,1)\}$ and
$\underline{p}=\left(\frac{1}{3},\frac{1}{3},\frac{1}{3}\right)$. The endpoints of the graph are $\left(\frac{\log 3}{\log 2}+\frac{\log 2}{\log 3}-1,\frac{\log 2}{\log 3}\right)$ and $\left(\frac{\log 3}{\log 2},0\right)$.}
\end{figure}
We are not going to rewrite all of the King's paper \cite{king},
so we will frequently make use of his partial results, referring
the reader to his paper. In particular, the properties of
$f(\alpha)$ can be found in section 4 of \cite{king}.

The rest of paper is divided as follows. In Section \ref{lb} we
will show how to obtain the lower bound for arbitrarily digit
sets. The main argument, calculating of the upper bound, is
presented in Section \ref{ub}.

\section{Symbolic coding and the lower bound}\label{lb}

The lower bound we need was obtained by Barral and Mensi in
\cite{BM} however for completeness we give a proof here. The lower
bound can be proved following the method of King, \cite{king},
with the addition of just one simple lemma. First of all we need
to introduce of a natural symbolic coding. If we let
$\Sigma=D^{\N}$ it is possible to define a natural projection
$\Pi:\Sigma\rightarrow\Lambda$. Initially we let
$\pi:D\rightarrow\{0,\ldots,m-1\}$ be defined by $\pi((i,j))=j$
and $\overline{\pi}:D\rightarrow\{0,\ldots,n-1\}$ by
$\overline{\pi}(i,j)=i$. This allows us to define $\Pi$ by
$$\Pi(\underline{i})=\sum_{j=0}^{\infty}(\overline{\pi}(i_j)n^{-j},\pi(i_j)m^{-j}),$$
where $i_j$ is the $j$-th element of the sequence $\underline{i}$.

It is usual in the study of self-similar sets or conformal systems
to study cylinder sets. In this self-affine setting there is a
related idea of approximate squares. To construct them we let
$\overline{\Sigma}=\{0,\ldots,m-1\}^\N$ and
$\pi,\overline{\pi}:D\rightarrow\{0,\ldots,m-1\}$ be defined by
$\pi((i,j))=j$ and $\overline{\pi}(i,j)=i$. For $k\in\N$ we let
$l(k)=[\sigma k]$ (where $[.]$ denotes the integer part). For
$\underline{i}\in \Sigma$ we define
$$R_k(\underline{i})=\left[\sum_{j=1}^{l(k)}\overline{\pi}(i_j)n^{-j}, \sum_{j=1}^{l(k)}\overline{\pi}(i_j)n^{-j}+n^{-l(k)}\right] \times  \left[\sum_{j=1}^{k}\pi(i_j)m^{-j},
\sum_{j=1}^{k}\pi(i_j)m^{-j}+m^{-k}\right]      .$$ The shorter
side of $R_k(\underline{i})$ is always of length $m^{-k}$. As the
ratio between the sides of this rectangle is clearly between $1:1$
and $1:n$, we can let $D_1=\sqrt{n^2+1}$ and note that for all
$\underline{i}\in\Sigma$ we have that $|R_k(\underline{i})|\leq
D_1m^{-k}$. An approximate square $R_k(\underline{i})$ can contain
elements $x=\Pi(\underline{j})$ where $(j_{l+1},\ldots,j_k)\neq
(i_{l+1},\ldots,i_k)$. To help keep track of these elements we
denote the set of their possible initial segments of symbolic
expansions:
$$\Gamma_k(\underline{i})=\{(j_1,\ldots,j_k) :(i_1,\ldots,i_l)=(j_1,\ldots,j_l)\text{ and }\pi(j_{l+1})=\pi(i_{l+1}),\ldots,\pi(j_k)=\pi(i_k)\}.$$

For each $\underline{i}\in\Sigma$ we will define
$$\delta_{\mu}(\underline{i})=\lim_{k\rightarrow\infty}-\frac{\log\mu(R_k(\underline{i}))}{k\log m}$$
if this limit exists. Under the separation conditions imposed in \cite{king} if $\Pi\underline{i}=x$ then
$\delta_{\mu}(\underline{i})=d_{\mu}(x)$ as long as one of these two limits exists. However without these
assumptions this is not always the case. The following lemma shows that in terms of calculating the
lower bound this poses no problem. We fix $t\in R$ and let $\nu_t$ be the Bernoulli measure defined by the
probability vector $\{P_{ij}\}_{(i,j)\in D}$ where for $(i,j)\in D$
$$P_{ij}=p_{ij}^tm^{\beta(t)}q_i^{(1-\sigma)t}\gamma_i^{\sigma-1}.$$
We also let $Q_i=\sum_{j:(i,j)\in D}P_{ij}$ and
$$\alpha(t)=\frac{-\sigma\sum_{(i,j)\in D}P_{ij}\log p_{ij}-(1-\sigma)\sum_{i=0}^{m-1} Q_i\log q_i}{\log m}.$$

\begin{lemma}
For $\mu_t$ almost all $\underline{i}$ we have that
$$\delta_{\mu}(\underline{i})=d_{\mu}(\Pi\underline{i})=\alpha(t)$$
\end{lemma}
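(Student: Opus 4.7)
The lemma splits into a symbolic identity $\delta_\mu(\underline{i}) = \alpha(t)$ and a geometric comparison $d_\mu(\Pi\underline{i}) = \delta_\mu(\underline{i})$. For the symbolic part, my plan is to sum the Bernoulli weights of $\mu$ over $\Gamma_k(\underline{i})$. Since in the last $k-l(k)$ positions of $\Gamma_k$ only the row coordinate is constrained, this collapses to
\begin{equation*}
\mu(R_k(\underline{i})) = \prod_{s=1}^{l(k)} p_{i_s} \prod_{s=l(k)+1}^{k} q_{\pi(i_s)}.
\end{equation*}
Taking $-\log(\cdot)/(k\log m)$, noting $l(k)/k \to \sigma$, and applying the strong law of large numbers to the ergodic Bernoulli measure $\nu_t$, whose one-dimensional marginals on full symbols and on rows are $\{P_{ij}\}$ and $\{Q_i\}$ respectively, identifies the limit as exactly $\alpha(t)$.

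For the geometric identity, one direction is easy: $R_k(\underline{i}) \subset B(\Pi\underline{i}, D_1 m^{-k})$ gives $\mu(B(\Pi\underline{i}, D_1 m^{-k})) \ge \mu(R_k(\underline{i}))$, and since consecutive radii $D_1 m^{-k}$ and $D_1 m^{-(k+1)}$ differ by a bounded ratio, interpolation yields $\overline{d}_\mu(\Pi\underline{i}) \le \delta_\mu(\underline{i})$. For the reverse, given $r \in [m^{-k}, m^{-(k-1)})$, I would cover $B(\Pi\underline{i}, r)$ by the approximate squares $R_k(\underline{j})$ which it meets; a quick geometric count shows there are at most $N = O(1)$ such neighbour squares, obtained from $R_k(\underline{i})$ by unit shifts of the addresses at the two natural scales $n^{-l(k)}$ and $m^{-k}$. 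Therefore
\begin{equation*}
\mu(B(\Pi\underline{i}, r)) \le N \cdot \max_{\underline{j}\ \text{neighbour}} \mu(R_k(\underline{j})),
\end{equation*}
so everything reduces to bounding $\mu(R_k(\underline{j}))/\mu(R_k(\underline{i}))$ for each neighbour.

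The main obstacle, and the entire reason the previous papers needed a separation hypothesis, is exactly this neighbour comparison, since without separation a unit shift can propagate through many digits and land on a sequence of very different weight. My plan is to exploit the carry structure of the shift explicitly. A positive $y$-shift of the scale-$m^{-k}$ address modifies only the row coordinates $\pi(i_s)$ from position $k$ backwards through the trailing run of maximal entries, and symmetrically for negative shifts and for $x$-shifts of the scale-$n^{-l(k)}$ address acting on the column coordinates. A Borel--Cantelli argument on $\nu_t$ shows that for $\nu_t$-almost every $\underline{i}$ every such trailing run up to level $k$ has length $O(\log k)$. Since each single digit flip alters a $p_{i_s}$ or $q_{\pi(i_s)}$ factor by at most the bounded ratio of largest to smallest positive entry, the cumulative ratio satisfies $\mu(R_k(\underline{j})) \le k^C \mu(R_k(\underline{i}))$ for all neighbours, with $C$ depending only on $\underline{p}$. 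Substituting and dividing by $-\log r \sim k\log m$ gives
\begin{equation*}
-\frac{\log \mu(B(\Pi\underline{i}, r))}{-\log r} \ge \delta_\mu(\underline{i}) - O\!\left(\frac{\log k}{k}\right),
\end{equation*}
so letting $r \to 0$ yields $\underline{d}_\mu(\Pi\underline{i}) \ge \delta_\mu(\underline{i})$ and hence $d_\mu(\Pi\underline{i}) = \delta_\mu(\underline{i}) = \alpha(t)$ on a set of full $\nu_t$-measure.
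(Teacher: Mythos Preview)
Your argument is correct. The symbolic half matches the paper's (which simply cites King's Lemma~4 for $\delta_\mu=\alpha(t)$ almost surely), but for the geometric identity you take a genuinely different route.

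The paper never compares neighbour measures. Instead it introduces a \emph{forward}-looking quantity
\[
Z_k(\underline{i})=\max\Bigl\{\min\{\eta:\pi(i_{k+\eta})\neq\pi(i_{k+1})\},\ \sigma^{-1}\min\{\eta:\overline{\pi}(i_{l(k)+\eta})\neq\overline{\pi}(i_{l(k)})\}\Bigr\},
\]
which measures how long the digits just past positions $k$ and $l(k)$ repeat. If this run is short, then $\Pi\underline{i}$ lies at distance at least $m^{-k-Z_k(\underline{i})}$ from the boundary of $R_k(\underline{i})$, so one has the containment $B(\Pi\underline{i},m^{-k-Z_k(\underline{i})})\subset R_k(\underline{i})$ and the two-sided sandwich
\[
\mu\bigl(B(x,D_1 m^{-k})\bigr)\ \ge\ \mu\bigl(R_k(\underline{i})\bigr)\ \ge\ \mu\bigl(B(x,m^{-k-Z_k(\underline{i})})\bigr).
\]
Borel--Cantelli gives $Z_k/k\to 0$ almost surely, and the lemma follows immediately. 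Your approach instead looks \emph{backward}: you cover the ball by the $O(1)$ adjacent level-$k$ approximate squares and bound each neighbour's measure against $\mu(R_k(\underline{i}))$ by tracking the carry, whose length is governed by the trailing run of extremal digits ending at positions $k$ and $l(k)$. Both Borel--Cantelli arguments use only that $D$ meets at least two rows and two columns. The paper's containment trick is shorter, since it sidesteps the question of whether a carried symbol $(\pi'_s,\overline{\pi}(i_s))$ even lies in $D$ and what its weight is; your neighbour-comparison argument is a little heavier here but is exactly the mechanism one needs later in the upper-bound section, where containment is unavailable and covers by nearby approximate squares are forced. Note also that your $O(\log k)$ carry bound is stronger than needed: any $o(k)$ suffices, which is all the paper's $Z_k/k\to 0$ asserts.
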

\begin{proof}
The fact that $\delta_{\mu}(\underline{i})=\alpha(t)$ for $\mu_t$
almost all $\underline{i}$ follows from Lemma 4 of \cite{king} (in
the formula given for $\alpha(t)$ in lemma 5 in \cite{king}
$-\sum_{(i,j)\in D} p_{ij}$ should read $-\sum_{(i,j)\in D}
P_{ij}\log P_{ij}$). Thus we only need to show that
$\delta_{\mu}(\underline{i})=d_{\mu}(\Pi\underline{i})$ for
$\mu_t$ almost all $\underline{i}$. We let
$$Z_k(\underline{i})=\max\left\{\min\{\eta:\pi(i_{k+\eta})\neq \pi(i_{k+1})\},
\sigma^{-1}\min\{\eta:\overline{\pi}(i_{l(k)+\eta})\neq \overline{\pi}(i_{l(k)})\}\right\}$$
and note that by the Borel-Cantelli Lemma
\[
\limsup_{k\rightarrow\infty}\frac{Z_k(\underline{i})}{k}=0
\]
for $\mu_t$ almost all $\underline{i}$ (here we use the assumption
that the digit set has elements in two distinct rows and two
distinct columns). We now fix $\underline{i}\in\Sigma$ such that
$\limsup_{k\rightarrow\infty}\frac{Z_k(\underline{i})}{k}=0$ and
let $x=\Pi(\underline{i})$. We have that for any $k$
$$\mu(B(x,D_1m^{-k}))\geq\mu(R_k(\underline{i}))\geq \mu(B(x,m^{-k-Z_k(\underline{i})})).$$
The result now follows by considering $k$ sufficiently large.
\end{proof}

We can now conclude that
$$\dim(X_{\alpha(t)})\geq\dim \mu_t\circ\Pi^{-1}.$$
However in \cite{king} it is shown in Lemma 5 that
$\alpha(t)=-\beta'(t)$ and that
$\dim\mu_t\circ\Pi^{-1}=t\alpha(t)+\beta(t)$. Technically this
result in King is proved with additional assumptions about $D$
however the formula for the dimension of a Bernoulli measure is
still valid without these assumption (see for example \cite{bed}
or \cite{BM}). We can now deduce that
$$\dim(X_{\alpha(t)})\geq t\alpha(t)+\beta(t)$$
and the proof of the lower bound is complete.

\section{Upper bound}\label{ub}

In this section we will find efficient coverings of $X_{\alpha}$
by considering appropriate sets of approximate squares. In
particular we want to show that $\dim X_{\alpha}\leq \alpha
t+\beta(t)$ for any $t\in\R$. Combining this with the lower bound
completes the proof of Theorem \ref{main}. We will fix $t\in\R$
for the rest of this section. For any $\alpha\in
(\alpha_{\min},\alpha_{\max})$ and $\epsilon>0$ we will denote
$$Y(\alpha,\epsilon,k)=\{R_k(\underline{i}):m^{-k\alpha(1+\epsilon)}\leq\mu(R_k)\leq m^{-k\alpha(1-\epsilon)}\}.$$
For a sequence $\underline{i}\in\Sigma$ we will define
$$V_k(\underline{i}):=\inf{\big \{}l>k:\pi(i_l)\notin\{0,m-1\}\text{ or }\pi(i_l)\neq i_{m+1}{\big \}}-k-1.$$
This function will be used to bound the distance between $x$ and
the horizontal boundary of $R_k(\underline{i})$.

\begin{lemma}\label{close}
For any $x\in X_\alpha$ and $\underline{j}\in\Sigma$ such that
$\Pi\underline{j}=x$ there exists $K\in\N$ such that for any
$k\geq K$ there exists $\underline{i}\in\Sigma$ such that:
\begin{enumerate}
\item
$d(R_k(\underline{i}),x)\leq \frac{D_1 m^{-k}}{2}$,
\item
$R_k(\underline{i})\in Y(\alpha,\epsilon,k)$,
\item
If $V_k(\underline{j})\leq\frac{\epsilon k}{2}$ then we can choose
$\underline{i}$ such that $(\pi(i_1),\ldots,\pi(i_k))=(\pi(j_1),\ldots,\pi(j_k)).$
\end{enumerate}
\end{lemma}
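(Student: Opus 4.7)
The argument will rest on the fact that $d_\mu(x)=\alpha$ genuinely exists for $x\in X_\alpha$, combined with a bounded-multiplicity pigeonhole on approximate squares close to $x$. First I would fix $\epsilon''>0$ small with $(1+\epsilon'')(1+\epsilon/2)<1+\epsilon$ (for instance $\epsilon''<\epsilon/(2+\epsilon)$), and use the existence of the local dimension to pick $K_0$ so that $r^{\alpha(1+\epsilon'')}\leq\mu(B(x,r))\leq r^{\alpha(1-\epsilon'')}$ whenever $r\leq m^{-K_0}$.

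For parts (1) and (2), the plan is to cover $B(x,m^{-k})\cap\Lambda$ by the approximate squares $R_k(\underline{i})$ that meet $B(x,m^{-k})$. Any such $\underline{i}$ satisfies $d(R_k(\underline{i}),x)\leq m^{-k}\leq D_1 m^{-k}/2$ (since $D_1=\sqrt{n^2+1}\geq 2$), and because each such square has diameter at most $D_1 m^{-k}$, only a bounded number $C_0=C_0(m,n)$ of them can occur. Pigeonholing will produce one $\underline{i}$ with $\mu(R_k(\underline{i}))\geq\mu(B(x,m^{-k}))/C_0\geq m^{-k\alpha(1+\epsilon'')}/C_0$, which for $k$ large exceeds $m^{-k\alpha(1+\epsilon)}$; the upper bound in (2) will follow from $R_k(\underline{i})\subseteq B(x,3D_1 m^{-k}/2)$ together with the local dimension estimate on $\mu(B(x,3D_1m^{-k}/2))$.

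For (3), the hypothesis $V_k(\underline{j})\leq\epsilon k/2$ should translate into the geometric fact that $x$ sits at $y$-distance at least $m^{-k-V_k(\underline{j})-1}\geq m^{-k(1+\epsilon/2)-1}$ from the top and bottom of the horizontal $y$-strip of $R_k(\underline{j})$, because $V_k$ measures the initial run of $\pi(j_l)$-values in $\{0,m-1\}$ past level $k$. Setting $r=m^{-k(1+\epsilon/2)-1}$, the ball $B(x,r)$ will lie strictly inside that strip, so any approximate square $R_k(\underline{i})$ meeting $B(x,r)$ is forced to have its $y$-range equal to the strip's (the only alternative puts $R_k(\underline{i})$'s $y$-range at $y$-distance exceeding $r$ from $x$), hence $(\pi(i_1),\ldots,\pi(i_k))=(\pi(j_1),\ldots,\pi(j_k))$. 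Running the same pigeonhole on $B(x,r)$, using $\mu(B(x,r))\geq r^{\alpha(1+\epsilon'')}$, yields for $k$ large an $\underline{i}$ in the same strip with $\mu(R_k(\underline{i}))\geq m^{-k\alpha(1+\epsilon)}$; the upper inequality in (2) is inherited.

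The delicate point is exactly this parameter choice in (3): the drop from radius $m^{-k}$ to $r=m^{-k(1+\epsilon/2)-1}$ forced by the boundary-distance estimate must not exhaust the $\epsilon$-slack in (2), and picking $\epsilon''<\epsilon/(2+\epsilon)$ at the outset is what makes $(1+\epsilon'')(1+\epsilon/2)<1+\epsilon$ hold. Everything else is standard bounded-overlap pigeonholing.
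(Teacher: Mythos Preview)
Your proposal is correct and follows essentially the same route as the paper's proof: use the existence of $d_\mu(x)=\alpha$ to control $\mu(B(x,r))$ with a small parameter ($\epsilon''$ for you, $\epsilon/3$ in the paper), pigeonhole over the boundedly many level-$k$ approximate squares near $x$ for parts (1)--(2), and for (3) shrink the radius to $\approx m^{-k(1+\epsilon/2)}$ so that the ball stays inside the horizontal strip of $R_k(\underline{j})$ and repeat the pigeonhole among the squares with matching $\pi$-coordinates. The paper is slightly more explicit about the constants (nine squares in general, two within the strip) but the argument is the same.
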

\begin{proof}
Fix $\epsilon>0$. If $x\in X_\alpha$ then we can find $R>0$ such
that if $r<R$ we have that $\frac{\log\mu(B(x,r))}{\log r}\in
[\alpha(1-\epsilon)/3,\alpha(1+\epsilon)/3]$. We choose $k$ such
that $D_1m^{-k}\leq \frac{R}{2}$ and let $\underline{j}\in\Sigma$
satisfy $d(R_k(\underline{j}),x)\leq \frac{D_1m^{-k}}{2}$. It then
follows that $\mu(R_k(\underline{j}))\leq
(D_1m^{-k})^{\alpha(1-\epsilon/3)}$. Furthermore there are at most
$9$ sequences $\underline{j}^k$ where $d(R_k(\underline{j}),x)\leq
m^{-k}$ which means that one of these sequences $\underline{j}^k$
must satisfy $\mu(R_k(\underline{j}))\geq
\frac{m^{-k\alpha(1+\epsilon/3)}}{9}$. Parts 1 and 2 of the
assertion now follow easily.

For part 3 we fix such a $k$ and note that for any sequence
$\underline{i}$ where $(\pi(i_1),\ldots,\pi(i_k))\neq
(\pi(j_1),\ldots,\pi(j_k))$ we have that

\[
d(x,R_k(\underline{i}))\geq m^{-k-V_k(\underline{j})}  \geq
m^{-k(1+\epsilon/2)}.
\]

This means that any $\underline{i}\in \Sigma$ which satisfies
$B(x,m^{-k(1+\epsilon/2)})\cap R_k(\underline{i})\neq\emptyset$
must also satisfy
$(\pi(i_1),\ldots,\pi(i_k))=(\pi(j_1),\ldots,\pi(j_k)).$ Hence the ball $B(x,m^{-k(1+\epsilon/2)})$ satisfying
$$\mu(B(x,m^{-k(1+\epsilon/2)}))\geq m^{-k\alpha(1+\epsilon/2)(1+\epsilon/3)}\geq 2m^{-k\alpha(1+\epsilon)}$$
must be contained in the union of two approximate squares $\R_k(\underline{i})$ which both satisfy $(\pi(i_1),\ldots,\pi(i_k))=(\pi(j_1),\ldots,\pi(j_k))$. This implies that one of these approximate squares has measure not smaller than $m^{-k\alpha(1+\epsilon)}$. This together with the upper bound proved for the measure in part (2) means that one of these approximate squares is contained in $Y(\alpha,\epsilon,k)$. This completes the proof.
  \end{proof}
It should be noted that while this lemma indicates that
\[
X_\alpha \subset \bigcup_{R_k(\underline{i}) \in Y(\alpha, \epsilon,k)} B(R_k(\underline{i}), D_1 m^{-k}/2),
\]
this would not provide an efficient cover in terms of Hausdorff
dimension. To get the efficient cover we define a function
$\omega:D\rightarrow\R$ by

$$\omega(i,j)= q_i^t\gamma_i.$$
It is important that $\omega$ only depends on the vertical
coordinates. For $\underline{i}\in\Sigma$ we will denote

\[
B_l(\underline{i})=\frac {1} {l} \sum_{r=1}^l \log\omega_{i_r}
\]
and

\[
A_k(\underline{i}) = B_{l(k)}(\underline{i})-B_k(\underline{i}).
\]
$A_k$ is essentially the logarithm of the function $f_k$ defined on page 6 of \cite{king}.  The upper bound in
\cite{king} uses covering of approximate squares, $R_k(\underline{i})$, where $A_k(\underline{i})>-\epsilon$ for
some small $\epsilon$. More precisely for any $\epsilon>0$ we let
$$G(\alpha,\epsilon,k)=Y(\alpha,\epsilon,k)\cap\{R_k(\underline{i}):A_k(\underline{i})\geq-\log(1+\epsilon)\}$$
and note that in \cite{king} it is shown that for any $\underline{i}\in\Sigma$ where $\delta_{\mu}(\underline{i})=\alpha$,
there exist infinitely many $k$ such that $\R_k(\underline{i})\in G(\alpha,\epsilon,k)$. We are going to show that these covers can be modified so that the result holds for any $x\in X_{\alpha}$ even if $\underline{i}\in|\Sigma$ with $\Pi(\underline{i})=x$ does not satisfy $\delta_{\mu}(\underline{i})=\alpha$ . To be able to do this we need the following proposition:
\begin{prop}\label{keyprop}
For any $\epsilon>0$ and $x\in X_{\alpha}$ there exist infinitely
many $k\in\N$ for which there is a sequence
$\underline{i}\in\Sigma$ such that
\begin{enumerate}
\item[1.] $d(R_k(\underline{i}),x)\leq D_1 m^{-k},$

\item[2.] $A_k(\underline{i})\geq-\epsilon,$

\item[3.] $R_k(\underline{i})\in Y(\alpha,\epsilon,k).$
\end{enumerate}
\end{prop}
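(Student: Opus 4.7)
Fix $x \in X_\alpha$ and $\epsilon > 0$. The plan is to combine Lemma~\ref{close}, which already yields conditions (1) and (3), with an elementary $\limsup$ argument supplying (2). The essential flexibility is that $A_k(\underline{i})$ depends on the vertical digits $\overline{\pi}(i_r)$ for all $r\le k$, whereas $R_k(\underline{i})$ and $\mu(R_k(\underline{i}))$ depend on $\overline{\pi}(i_r)$ only for $r\le l(k)$; so we may modify $\overline{\pi}(i_r)$ freely for $l(k)<r\le k$, subject to the admissibility constraint $(\overline{\pi}(i_r),\pi(i_r))\in D$, without disturbing (1) or (3).

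First, I would establish the elementary fact that for any bounded real sequence $\phi\colon\N\to\R$,
\[
\limsup_{k\to\infty}\bigl(\phi(l(k))-\phi(k)\bigr)\ge 0.
\]
Indeed, choosing $k_n$ with $\phi(k_n)\to\liminf\phi$ gives $\phi(l(k_n))\ge\liminf\phi-o(1)$, whence $\phi(l(k_n))-\phi(k_n)\ge -o(1)$. Applied to $\phi(k)=B_k(\underline{j})$ for a fixed $\underline{j}\in\Pi^{-1}(x)$---bounded because $\omega$ takes only finitely many positive values---this produces an infinite set $K\subseteq\N$ on which $A_k(\underline{j})\ge-\epsilon/3$.

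For $k\in K$ large, Lemma~\ref{close} supplies $\underline{i}$ satisfying (1) and (3). Provided Lemma~\ref{close}(3) applies, so that we may take $\pi(i_r)=\pi(j_r)$ for $r\le k$, we redefine the vertical coordinates for $l(k)<r\le k$ by setting $\overline{\pi}(i_r):=\overline{\pi}(j_r)$. This is admissible since $j_r\in D$ already has the matching horizontal coordinate, and forces $\log\omega(i_r)=\log\omega(j_r)$ throughout $l(k)<r\le k$, giving
\[
A_k(\underline{i})-A_k(\underline{j}) = B_{l(k)}(\underline{i})-B_{l(k)}(\underline{j}).
\]
The remaining discrepancy is supported on positions $r\le l(k)$, and is nonzero only when $R_k(\underline{i})$ is a horizontal neighbor of $R_k(\underline{j})$ rather than equal to it.

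The hardest step is to secure simultaneously, along an infinite subset of $K$, both (i) the hypothesis $V_k(\underline{j})\le\epsilon k/2$ needed for Lemma~\ref{close}(3), and (ii) an $o(1)$ bound on $|B_{l(k)}(\underline{i})-B_{l(k)}(\underline{j})|$. Both reduce to controlling base-$m$ and base-$n$ carry tails in the digit expansions of the horizontal and vertical positions of $x$; I would handle these with Borel--Cantelli--style arguments in the spirit of Section~\ref{lb}, leveraging the standing assumption that the digit set has entries in more than one row and more than one column. On the resulting infinite subset we then obtain $A_k(\underline{i})\ge A_k(\underline{j})-\epsilon/3\ge-\epsilon$, giving (1), (2), and (3) simultaneously.
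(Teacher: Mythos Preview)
Your overall strategy matches the paper's: use Lemma~\ref{close} for (1) and (3), and transfer (2) from a fixed coding $\underline{j}$ of $x$ via the coordinate-matching in Lemma~\ref{close}(3). But there is a genuine gap at precisely what you call the ``hardest step''.

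First a clarification that actually helps you: $\omega(i,j)=q_i^t\gamma_i$ depends only on the $\pi$-coordinate (the one ranging over $\{0,\ldots,m-1\}$), not on $\overline{\pi}$. Since Lemma~\ref{close}(3) already forces $\pi(i_r)=\pi(j_r)$ for all $r\le k$, one gets $A_k(\underline{i})=A_k(\underline{j})$ \emph{exactly}; there is no neighbour discrepancy at positions $r\le l(k)$, and your redefinition step is unnecessary. So item~(ii) is a non-issue.

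The real gap is item~(i). A Borel--Cantelli argument yields only almost-sure statements with respect to some measure, whereas here $x\in X_\alpha$ is arbitrary and $\underline{j}$ an arbitrary coding of it; no probabilistic control is available. Concretely, nothing rules out that along the sequence $K$ produced by your $\liminf$ argument one has $V_k(\underline{j})>\epsilon k/2$ for every large $k\in K$: the times when $B_k$ is near its $\liminf$ bear no a priori relation to the times when the $\pi$-tail avoids a long run of $0$'s or $(m-1)$'s. The paper closes this gap with the deterministic Lemma~\ref{good}: assuming that for all large $k$ either $A_k(\underline{j})\le-\epsilon$ or $V_k(\underline{j})>0$, it first derives a linear bound $V_k\le Vk$ from the monotonicity of $B_\eta$ on constant-$\omega$ runs, and then constructs a finite sequence $n_1>n_2>\cdots>n_J$ (taking $n_{j+1}$ near $\sigma n_j$, or the nearest index with $V=0$) along which $B_{n_{j+1}}\le B_{n_j}-\epsilon$. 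This forces $B_{n_J}$ below the uniform lower bound $C_1$, a contradiction. Thus there are infinitely many $k$ with $A_k(\underline{j})>-\epsilon$ \emph{and} $V_k(\underline{j})=0$ simultaneously, after which Lemma~\ref{close}(3) applies and (2) follows exactly. Your $\limsup$ observation is the seed of this, but without the descent construction it does not deliver the simultaneous control the proposition needs.
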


Before we prove Proposition \ref{keyprop}, we will show how this
proposition and the following simple lemma imply the upper bound
for $\dim X_{\alpha}$.
\begin{lemma}\label{cylinder}
If $R_k(\underline{i})\in G(\alpha,\epsilon,k)$ and $k$ is
sufficiently large then
$$\mu(R_k(\underline{i}))^t\leq (1+\epsilon)^k\sum_{(j_1,\ldots,j_k)\in\Gamma_k(\underline{i})}(p_{j_1}\cdots p_{j_k})^t(\omega_{j_1}\cdots \omega_{j_k})^{1-\sigma}.$$
\end{lemma}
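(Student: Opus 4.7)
The plan is to use the explicit product formula for the Bernoulli measure of an approximate square, apply a H\"older-type inequality to produce the $p^t\omega^{1-\sigma}$ weights on the right, and finally invoke the hypothesis $A_k(\underline{i})\ge-\log(1+\epsilon)$ to absorb the H\"older penalty into the factor $(1+\epsilon)^k$.

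Concretely, the cylinders $[\underline{j}]$ for $\underline{j}\in\Gamma_k(\underline{i})$ partition $R_k(\underline{i})$, so $\mu(R_k(\underline{i}))=p_{i_1}\cdots p_{i_l}\prod_{r=l+1}^{k}P_{c_r}$ with $P_c=\sum_{a:(a,c)\in D}p_{(a,c)}$ and $c_r=\pi(i_r)$. Since $j_r=i_r$ for $r\le l$ and $\pi(j_r)=c_r$ for $r>l$ on $\Gamma_k(\underline{i})$, the right-hand side of the lemma factors as $(1+\epsilon)^k(p_{i_1}\cdots p_{i_l})^t(\omega_{i_1}\cdots\omega_{i_l})^{1-\sigma}\prod_{r=l+1}^{k}S_{c_r}$ where $S_c=\sum_{a:(a,c)\in D}p_{(a,c)}^t\omega_{(a,c)}^{1-\sigma}$. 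Cancelling the common prefix $(p_{i_1}\cdots p_{i_l})^t$, the lemma reduces to
\[\prod_{r=l+1}^{k}P_{c_r}^t\le (1+\epsilon)^k\prod_{r=1}^{l}\omega_{i_r}^{1-\sigma}\prod_{r=l+1}^{k}S_{c_r}.\]

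Next I would apply H\"older's inequality column by column: writing $p_{(a,c)}=[p_{(a,c)}\omega_{(a,c)}^{(1-\sigma)/t}]\cdot[\omega_{(a,c)}^{-(1-\sigma)/t}]$ and using conjugate exponents $t$ and $t/(t-1)$ (with the standard modifications for $t<1$; the case $t=1$ is trivial), one obtains $P_c^t\le S_c T_c^{t-1}$, where $T_c=\sum_{a:(a,c)\in D}\omega_{(a,c)}^{-(1-\sigma)/(t-1)}$. The lemma is therefore reduced to
\[\prod_{r=l+1}^{k}T_{c_r}^{t-1}\le (1+\epsilon)^k\prod_{r=1}^{l}\omega_{i_r}^{1-\sigma}.\]

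The main obstacle---and the place where the hypothesis $A_k(\underline{i})\ge-\log(1+\epsilon)$ is essential---is this last inequality. Since $A_k=B_l-B_k$ with $B_j=j^{-1}\sum_{r=1}^{j}\log\omega_{i_r}$, the hypothesis translates via $l=[\sigma k]$ into a lower bound on $\sum_{r=1}^{l}\log\omega_{i_r}$ which, after exponentiation and raising to the power $1-\sigma$, should dominate the H\"older prices $T_{c_r}^{t-1}$ up to a multiplicative $(1+\epsilon)^k$. Making this matching of exponents precise is the technical heart of the proof; an essentially identical estimate appears in King's paper \cite{king} under stronger separation conditions ensuring that there is only one row per column (in which case each $T_c$ collapses to a single $\omega_{(a,c)}^{-(1-\sigma)/(t-1)}$ term), and the argument extends to the general case by bounding each $T_c$ carefully in terms of the $\omega$-values at the rows actually appearing in $\underline{i}$.
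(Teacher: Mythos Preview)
Your argument has a real gap, and it stems from overlooking the one structural fact the paper singles out: $\omega(i,j)=q_i^t\gamma_i^{-1}$ depends only on the row index, i.e.\ only on $\pi$. In your notation this means that for every $r>l$ the weight $\omega_{j_r}$ is \emph{constant} as $j_r$ ranges over the $\pi$-fiber $\{d:\pi(d)=c_r\}$; there is never more than one $\omega$-value per fiber, so your $S_{c_r}$ is simply $\omega_{i_r}^{1-\sigma}\gamma_{c_r}$ and your $T_{c_r}$ collapses to a single term in every case, not only under King's separation. Once you use this, the H\"older step is superfluous: one has the exact identity
\[
\mu(R_k(\underline{i}))^t=\sum_{(j_1,\ldots,j_k)\in\Gamma_k(\underline{i})}(p_{j_1}\cdots p_{j_k})^t\,\omega_{j_{l+1}}\cdots\omega_{j_k},
\]
because $\sum_{\pi(d)=c}p_d^t\omega_d=\omega_c\gamma_c=q_c^t$. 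The whole lemma then reduces to bounding the ratio
\[
\frac{\omega_{i_{l+1}}\cdots\omega_{i_k}}{(\omega_{i_1}\cdots\omega_{i_k})^{1-\sigma}}
=\frac{(\omega_{i_1}\cdots\omega_{i_k})^{\sigma}}{\omega_{i_1}\cdots\omega_{i_l}}
=\exp\bigl(\sigma k B_k(\underline{i})-lB_l(\underline{i})\bigr),
\]
and the hypothesis $A_k=B_l-B_k\ge-\log(1+\epsilon)$ together with $0\le\sigma k-l<1$ bounds this by a constant times $(1+\epsilon)^l$, hence by $(1+\epsilon)^k$ for $k$ large. This is exactly the paper's proof.

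By contrast, your proposal introduces a H\"older inequality that is not needed, leaves the ``technical heart'' unfinished, and would in any case run into trouble for $t\le 1$ (for $0<t<1$ the H\"older inequality you wrote reverses direction, and ``standard modifications'' do not repair this without further input). The remedy is simply to use the constancy of $\omega$ on $\pi$-fibers from the outset; then no inequality is needed before the final $A_k$ estimate.
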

\begin{proof}
If we fix $k\in\N$, $\underline{i}$ and write $l=l(k)$ then
$$\mu(R_k(\underline{i}))^t=\sum_{(j_1,\ldots,j_k)\in\Gamma_k(\underline{i})}(p_{j_1}\cdots p_{j_k})^t\omega_{j_{l+1}}\cdots \omega_{j_k}.$$
However for any $(j_1,\ldots,j_k)\in\Gamma_k(\underline{i})$ we
have that $\omega_{j_{l+1}}\cdots
\omega_{i_k}=\omega_{j_{l+1}}\cdots \omega_{i_k}$. Thus since
$A_k(\underline{i})\geq-\log(1+\epsilon)$ we have that for
$(j_1,\ldots,j_k)\in\Gamma_k(\underline{i})$
$$\frac{ \omega_{j_{l+1}}\cdots\omega_{j_k}}{(\omega_{j_1}\cdots\omega_{j_k})^{1-\sigma}}=\frac{(\omega_{i_1}\cdots\omega_{i_k})^{\sigma}}{\omega_{i_1}\cdots\omega_{i_l}}\leq e^{\sigma k B_k(\underline{i})-l B_l(\underline{i})}\leq e^{-B_k(\underline{i})}(1+\epsilon)^l.$$
To complete the proof simply note that $e^{-B_k(\underline{i})}$
is uniformly bounded by some constant $C$ and thus it is enough to
choose $k$ large enough so that $(1+\epsilon)^{k-l}\geq C$.
\end{proof}

Proposition \ref{keyprop} shows that for any $K\in \N$ and
$\epsilon>0$ we have

\[
X_\alpha \subset \bigcup_{k>K} \bigcup_{R_k(\underline{i}) \in
G(\alpha,\epsilon,k)} \widehat{R}_k(\underline{i}),
\]
where $\widehat{R}_k(\underline{i})$ stands for a rectangle with
the same center as $R_k(\underline{i})$ but $2D_1$ times greater.

We now choose $\epsilon>0$. As $m\geq 2$, we have $\log
(1+\epsilon) < 2\epsilon\log m$. For any
$\delta>\epsilon(\alpha|t|+2)$ and $K\in\N$ we have that by using
the definition of $G(\alpha,\epsilon,k)$, using Lemma
\ref{cylinder} and applying the multinomial theorem
\begin{eqnarray*}
&&\sum_{k\geq K}\sum_{R_k(\underline{i})\in G(\alpha,\epsilon,k)} |\widehat{R}_k(\underline{i})|^{\alpha t+\beta(t)+\delta}\\
&\leq& (2D_1)^{\alpha t + \beta(t)+\delta}\sum_{k\geq K}\sum_{R_k(\underline{i})\in G(\alpha,\epsilon,k)} |R_k(\underline{i})|^{\beta(t)+2\epsilon}\mu(R_k(\underline{i}))^{t}\\
&\leq&      D_2\sum_{k\geq K}\sum_{R_k(\underline{i})\in G(\alpha,\epsilon,k)} m^{-k(\beta(t)+2\epsilon)}(1+\epsilon)^k\sum_{(j_1,\ldots,j_k)\in\Gamma_k(\underline{i})}(p_{j_1}\cdots p_{j_k})^t(\omega_{j_1}\cdots \omega_{j_k})^{1-\sigma}\\
&\leq& D_2\sum_{k\geq K} m^{-2\epsilon k}(1+\epsilon)^k\sum_{(i_1,\ldots,i_k)\in D^k}m^{-k\beta(t)}(p_{i_1}\cdots p_{i_k})^t(\omega_{i_1}\cdots \omega_{i_k})^{1-\sigma}\\
&=& D_2\sum_{k\geq K} m^{-2\epsilon k}(1+\epsilon)^k \left(\sum_{(i,j)\in D} m^{-\beta(t)}p_{ij}^t\omega_{ij}^{1-\sigma}\right)^k\\
&=& D_2\sum_{k\geq K} m^{-2\epsilon k}(1+\epsilon)^k<\infty
\end{eqnarray*}
(where $D_2=2^{\alpha t + \beta(t)+\delta} D_1^{\alpha t +
2\beta(t) + \delta+ 2\epsilon}$). It follows immediately that
$$\dim X_{\alpha}\leq t\alpha+\beta(t)+\delta$$
and $\delta$ can be arbitrarily small.

We now proceed to prove Proposition \ref{keyprop}.
\subsection*{Proof of Proposition \ref{keyprop}}
We start with the case where $\underline{i}\in\Sigma$, $\Pi\underline{i}\in X_{\alpha}$ and for some $k$,
$V_k(\underline{i})=\infty$. We assume, without loss of generality, that $\pi(i_m)=0$ for all $m>k$ and that
$\pi(i_k)\neq 0$ and let $\epsilon>0$. If we fix $\eta\in\N$ and consider $k+\eta$ level approximate squares
$R_{k+\eta}(\underline{j})$ which have points within $m^{-k-\eta}/2$ of $x$ then $j$ must satisfy
$\pi(j_u)=\pi(i_u)$ for all $u\leq k+\eta$ or $\pi(j_u)=m-1$ for all $k<u\leq k+\eta$. In both of these cases if $\eta$
is sufficiently large then $A_{k+\eta}(\underline{j})\geq-\epsilon$ ($B_{k+\eta}$ is a converging sequence at such points). It follows from
parts 1 and 2 of Lemma \ref{close} that if $\eta$ is sufficiently large one of these sequences $j$ must satisfy
that $R_{k+\eta}(\underline{j})\in Y(\alpha,\epsilon,k+\eta)$.

We now turn to the case where $V_k(\underline{i})<\infty$. In this case Proposition \ref{keyprop} will follow
from the following lemma.
\begin{lemma}\label{good}
For any $\epsilon>0$ if $\underline{i}\in\Sigma$ and $V_u(\underline{i})<\infty$ for all $u$ then
we can find infinitely many $k\in\N$ such that
\begin{enumerate}
\item $A_k(\underline{i})>-\epsilon$,

\item $V_k(\underline{i})=0$.
\end{enumerate}
\end{lemma}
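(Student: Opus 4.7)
The plan is a contradiction argument built from two simple ingredients together with a shadowing trick to reconcile them.

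First, since $D$ is a finite set and each $q_i,\gamma_i>0$, the values $\log\omega_{i_r}$ lie in a bounded subset of $\R$, so there is a constant $M$ with $|B_k(\underline{i})|\leq M$ uniformly in $k$. Second, the hypothesis $V_u(\underline{i})<\infty$ for every $u$ prevents the sequence $\pi(i_r)$ from being caught indefinitely in any pattern forcing $V$ to stay positive, so the set $E:=\{k\in\N:V_k(\underline{i})=0\}$ is unbounded (here the standing hypothesis that $D$ has elements in more than one column plays a role).

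I would then argue by contradiction: suppose only finitely many $k\in E$ satisfy $A_k>-\epsilon$, so that $B_{l(k)}\leq B_k-\epsilon$ for every sufficiently large $k\in E$. The natural attempt is to iterate $l(k)=[\sigma k]$: if $l$ preserved $E$, then after at most $2M/\epsilon$ iterations the quantity $B$ would fall below $-M$, a contradiction. The obstacle is that $l(k)$ typically escapes $E$, so the inequality $B_{l(k)}\leq B_k-\epsilon$ cannot be applied at the next step of the iteration. My plan is to replace $l$ with a shadowing map $\tilde l\colon E\to E$ defined by $\tilde l(k):=\max\{k'\in E:k'\leq l(k)\}$, and to establish the continuity estimate $|B_{l(k)}-B_{\tilde l(k)}|=o(1)$ as $k\to\infty$. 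Since the increments of $B$ are of order $1/k$, this reduces to the sublinear gap bound $l(k)-\tilde l(k)=o(k)$.

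The sublinear gap bound is the hardest step, and is where I would expect most of the work to lie. To obtain it I would exploit the feature that $\omega(i,j)$ depends only on the row $i$, together with the finiteness of $D$: a long $E$-gap around $l(k)$ corresponds to a long run of $\pi(i_r)\in\{0,m-1\}$, which forces $\overline{\pi}(i_r)$ along this run to range only over the finitely many rows that pair with a boundary column in $D$. This yields effective control on $\log\omega_{i_r}$ over the run and hence on $B_k-B_{l(k)}$, incompatible with the supposed drop-by-$\epsilon$ occurring at too many iterations. Iterating $\tilde l$ then produces a chain in $E$ along which $B$ drops by essentially $\epsilon$ at each step, contradicting $|B|\leq M$ and completing the proof.
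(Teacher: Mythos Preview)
Your overall architecture---assume the conclusion fails on a tail of $E=\{k:V_k(\underline{i})=0\}$, iterate a version of $k\mapsto l(k)$, and contradict the uniform bound on $B_k$---is exactly the paper's. The problem is the specific bridge you propose. The sublinear gap estimate $l(k)-\tilde l(k)=o(k)$ is not available: under the contradiction hypothesis the paper obtains only the \emph{linear} bound $V_k(\underline{i})\le Vk$ with $V=\frac{1-\sigma}{\sigma\epsilon}(C_2-C_1)$ (their inequality~(4.1)), and one can build sequences realising gaps of this order. With linear gaps, $|B_{l(k)}-B_{\tilde l(k)}|$ need not be $o(1)$, so your continuity step collapses and the descent $B_{\tilde l(k)}\le B_k-\epsilon+o(1)$ cannot be justified by proximity alone. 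Your sketch for the $o(k)$ bound is also based on a misreading of the coordinates: the quantity $V_k$ and the weight $\omega$ depend on the \emph{same} projection $\pi$ (the $\{0,\dots,m-1\}$ coordinate), so along an $E$-gap it is $\pi(i_r)$, not $\overline{\pi}(i_r)$, that is frozen, and hence $\omega_{i_r}$ is actually \emph{constant} there---not merely constrained to finitely many values.

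That constancy is precisely the replacement for your missing estimate. Because $\omega_{i_r}$ is constant on the gap $(b,a)$ straddling $[\sigma n_j]$, the Ces\`aro averages $B_\eta$ are monotone for $b\le\eta\le a$, so $\min\{B_a,B_b\}\le B_{[\sigma n_j]}\le B_{n_j}-\epsilon$. The paper therefore sets $n_{j+1}$ to be whichever of $a,b$ has the smaller $B$-value (not always the left endpoint $\tilde l$), securing the $\epsilon$-drop with no smallness of the gap required. The linear gap bound is then used only to guarantee a definite geometric contraction $n_{j+1}\ge\sigma^W n_j$, which forces enough iterations to push $B$ below its lower bound. To repair your argument, replace the $o(k)$ claim by this monotonicity-plus-endpoint-choice mechanism.
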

\begin{proof}
Let $\epsilon>0$ and $\underline{i}\in\Sigma$ such that
$V_k(\underline{i})<\infty$ for all $\underline{i}$. It is
possible to find bounds $C_1,C_2\in\R$ such that $C_1\leq
B_k(\underline{i})\leq C_2$. We now prove the assertion by
contradiction. Assuming $\underline{i}$ does not satisfy the
assertion, we can find $K\in\N$ such that for all $k\geq K$ we
have that $A_k(\underline{i})\leq-\epsilon$ or
$V_k(\underline{i})>0$.

Firstly let us consider how large $V_k(\underline{i})$ can be. By
definition, $B_{\eta}(\underline{i})$ is the sequence of Birkhoff
averages for the locally constant function
$\underline{\tau}\rightarrow \log\omega_{\tau_1}$. Since we have
that $\omega_{i_\eta}$ is constant and equal to $\omega_{i_k}$ for
$k<\eta\leq k+V_k(\underline{i})$, we have

\begin{equation*}
A_{k+V_k(\underline{i})}(\underline{i}) = B_{[\sigma
(k+V_k(\underline{i}))]}(\underline{i})-B_{k+V_k(\underline{i})}(\underline{i}) = \frac
{(1-\sigma)k} {[\sigma(k+V_k(\underline{i}))]} (B_k(\underline{i})
- \log \omega_{i_k}).
\end{equation*}
As the last multiplier cannot be smaller than $C_1-C_2$ and the
left hand side is at most $-\epsilon$ (because
$V_{k+V_k(\underline{i})}(\underline{i})=0$), we can estimate

\begin{equation} \label{bigseq}
\frac 1 k V_k(\underline{i}) \leq V = \frac {1-\sigma} {\sigma
\epsilon} (C_2-C_1)
\end{equation}
for all except possibly finitely many $k$.

We continue with the proof. We choose $u\in\N$ such that
$u\epsilon>C_2-C_1$. Denote

\[
W= 1-\frac {\log (V+1)} {\log \sigma}.
\]
We can choose $n_1>K\sigma^{-W(u+1)}$ such that
$V_{n_1}(\underline{i})=0$ (and hence
$A_{n_1}(\underline{i})\leq-\epsilon$). We can now define a
sequence $n_j$ inductively. We assume that
$A_{n_j}(\underline{i})\leq-\epsilon$ and follow the following
procedure.
\begin{enumerate}
\item If $n_j<K\sigma^{-W}$ then the procedure terminates and we
let $J=j$.

\item If $A_{[\sigma n_j]}<-\epsilon$ then let $n_{j+1}=[\sigma
n_j]$ and note that $B_{n_{j+1}}\leq B_{n_j}-\epsilon$.

\item If $A_{[\sigma n_j]}\geq-\epsilon$ then since $n_j\geq K
\sigma^{-W}$ we know that $V_{[\sigma
n_j]}(\underline{i})>0$. We let
$$a=\min\{\eta\geq [\sigma n_j]:V_{\eta}(\underline{i})=0\}\text{ and }
b=\max\{\eta\leq [\sigma n_j]:V_{\eta}(\underline{i})=0\}.$$
Now choose
$$n_{j+1}=\left\{\begin{array}{lll}a&\text{ if }&B_a(\underline{i})\leq B_b(\underline{i})\\
b&\text{ if }&B_b(\underline{i}) <
B_a(\underline{i})\end{array}\right..$$ 
Since we have that $\omega_{i_\eta}$ is constant for $a<\eta\leq b$ it follows that $B_{\eta}(\underline{i})$ is monotonic for $a\leq\eta\leq b$.
Hence at either $a$ or $b$ the value of $B$ is going to be not
greater than $B_{[\sigma n_j]}(\underline{i})$. Thus it follows
that $B_{n_{j+1}}(\underline{i})\leq
B_{n_j}(\underline{i})-\epsilon$. Moreover \eqref{bigseq} implies
that $n_{j+1}\geq\sigma^W n_j$.
\end{enumerate}
We now have a sequence $\{n_j\}_{j=1}^{J}$ such that for each
$n_j$ where $1< j\leq J$ we have that $B_{n_j}(\underline{i})\leq
B_{n_{j-1}}(\underline{i})-\epsilon$ and $\sigma^W n_j\leq
n_{j+1}$. As $n_J < K \sigma^{-W}$ and $n_1 > K \sigma^{-W(u+1)}$,
it follows that $J-1\geq u$ and we can bound
\begin{eqnarray*}
B_{n_J}(\underline{i})&\leq& B_{n_1}(\underline{i})-(J-1)\epsilon\\
&\leq&C_2-u\epsilon< C_1.
\end{eqnarray*}
This contradicts the definition of $C_1$.
\end{proof}
We now fix $x\in X_{\alpha}$ and let $\underline{i}\in\Sigma$ satisfy $\Pi\underline{i}=x$. The proof of
Proposition \ref{keyprop} is completed by combining Lemma \ref{good} with part 3 of Lemma \ref{close} (recall that $A$ only depends on the vertical coordinates).

\end{document}